\documentclass[a4paper,12pt]{amsart}
\usepackage{mathtools}
\usepackage{amsmath, amssymb, amscd, amsthm, amsfonts, amsbsy}
\usepackage{array}
\usepackage{epsfig}
\usepackage{ragged2e}
\usepackage{graphics}
\usepackage{fancyhdr}
\usepackage{xcolor}
\usepackage[all,cmtip]{xy} 
\usepackage{ifthen}
\usepackage[pdftex,bookmarks=true]{hyperref}
\nonstopmode \numberwithin{equation}{section}
\makeatletter
\newcommand{\ubrace}[2]{\mathord{\mathpalette\ubrace@{{#1}{#2}}}}
\newcommand{\ubrace@}[2]{\ubrace@@#1#2}
\newcommand{\ubrace@@}[3]{
	\underbrace{#1#2}_{#3}%
}
\makeatother

\theoremstyle{definition}

\newtheorem{Thm}{Theorem}[section]
\newtheorem{Cor}[Thm]{Corollary}
\newtheorem{Lem}[Thm]{Lemma}
\newtheorem{Prop}[Thm]{Proposition}

\theoremstyle{definition}

\newtheorem{Rem}[Thm]{Remark}
\newtheorem{Ex}[Thm]{Example}
\begin{document}
	\bibliographystyle{amsplain}
	\bibliographystyle{amsplain}
	\title{A Structure Theorem for Centralizers of Dilations in $QI(\mathbb{R}_{+})$}
	\author{Swarup Bhowmik and Deblina Das}
	\address{Swarup Bhowmik, Department of Mathematics,
		Indian Institute of Science Education and Research Bhopal, India.}
	\email{swarupbhowmik@iiserb.ac.in}
	\address{Deblina Das, Department of Mathematics, Indian Institute of Technology Palakkad}
	\email{212114002@smail.iitpkd.ac.in, deblina099@gmail.com}

			\begin{abstract}
				We study centralizers of dilations in the quasi-isometry group of the positive real line. We introduce an asymptotic invariant defined via coarsely dense sequences at infinity and establish a rigidity theorem for quasi-isometries that coarsely commute with a dilation. As an application, we identify the subgroup of the centralizer consisting of elements with non-empty asymptotic invariant and prove that it is naturally isomorphic to the multiplicative group of positive real numbers.
			\end{abstract}
		
%


	\thanks{2020 \textit{Mathematics Subject Classification.} 20F65, 20F69}
	\thanks{\textit{Key words and phrases.} Quasi-isometry group; centralizers; coarse geometry.}
	\maketitle
	\pagestyle{myheadings}
	\markboth{Swarup Bhowmik and Deblina Das}{A Structure Theorem for Centralizers of Dilations in $QI(\mathbb{R}_{+})$}
	\bigskip

	\section{Introduction}
	
		The quasi-isometry group of a metric space encodes its large-scale geometric
	symmetries and plays an important role in geometric group theory. Explicit investigations of $QI(X)$ are known only for a limited class of metric spaces $X$,
	including solvable Baumslag-Solitar groups $BS(1,n)$ \cite[Theorem 7.1]{Farb Mosher}, the groups $BS(m,n),~1<m<n$ \cite[Theorem 4.3]{Whyte}, irreducible lattices in semisimple Lie groups (see \cite{Farb} and the references therein) etc. Even in the
	simplest cases, such as the real line $\mathbb R$ \cite{Ye Zhao} and the positive half-line
	$\mathbb R_+$ \cite{Bhowmik Chakraborty 2}, the quasi-isometry group is large and exhibits nontrivial
	algebraic behavior \cite{Sankaran}. Consequently, contemporary research has shifted toward uncovering the structure of its natural subgroups.

	 Despite the fact that the center of $QI(\mathbb{R})$ is trivial \cite{Chakraborty},
	the structure of its natural subgroups, in particular centralizers, remains
	largely unexplored. In this article, we study centralizers in $QI(\mathbb R_+)$ associated to dilation maps
	$$
	f_\lambda(x)=\lambda x,
	\qquad \lambda>1.
	$$
	More precisely, we investigate quasi-isometries that commute with a dilation up to bounded distance and examine how this coarse commutation property constrains their asymptotic behavior.
	
	A primary obstruction to analyzing the full centralizer $C([f_\lambda])$ is the presence of elements in $QI(\mathbb{R}_{+})$, the quasi isometry group of the positive real line with wildly oscillating local or global behavior. To extract meaningful asymptotic data, we introduce an invariant $I[g]$ (see \ref{Asymptotic Invariant}) associated to $[g]\in QI(\mathbb{R}_{+})$  defined via limits along coarsely dense sequences at infinity. 
	  We show that this invariant is well defined on quasi-isometry classes and establish a rigidity theorem for quasi-isometries that coarsely commute with a dilation. Specifically, we prove that coarse commutation with a dilation imposes a rigidity phenomenon, forcing all convergent asymptotic ratios arising from coarsely dense sequences to coincide. 
	  
	Motivated by this rigidity phenomenon, we consider the subgroup
	$$
	C^{*}([f_\lambda])
	=
	\left\{
	[g]\in C([f_\lambda]): I[g]\neq\varnothing
	\right\}
	$$
	of the centralizer consisting of elements with non-empty asymptotic invariant. Our main result provides a complete algebraic description of this subgroup.
	
	\begin{Thm}\label{main}
		Let $\lambda>1$. Then
		$$
		C^{*}([f_\lambda])
		\cong
		\mathbb R_{+}.
		$$
	\end{Thm}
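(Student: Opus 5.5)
We will show that the map $\Phi:C^{*}([f_\lambda])\to\mathbb R_{+}$ sending $[g]$ to the unique element of $I[g]$ is a group isomorphism. Here $I[g]$ is the set of limits $\lim_n g(x_n)/x_n$ taken over coarsely dense sequences $x_n\to\infty$ for which the limit exists. The rigidity theorem announced in the introduction says that if $[g]\in C([f_\lambda])$, then all convergent asymptotic ratios along coarsely dense sequences coincide. So $I[g]$ is either empty or a singleton $\{c_g\}$, and $\Phi$ is well defined on $C^{*}([f_\lambda])$. We also need $c_g\in(0,\infty)$. Since $g$ is a $(K,C)$-quasi-isometry of $\mathbb R_+$, we have $K^{-1}x-C'\le g(x)\le Kx+C'$ for large $x$, so $c_g\in[K^{-1},K]$. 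Well-definedness on classes is clear, because changing $g$ by a bounded amount does not change $g(x_n)/x_n$ in the limit.

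\textbf{Step 1: $C^{*}([f_\lambda])$ is a subgroup and $\Phi$ is a homomorphism.} Take $[g],[h]\in C^{*}$ with constants $a=c_g$ and $b=c_h$. Pick a coarsely dense sequence $x_n$ with $h(x_n)/x_n\to b$. Then $y_n=h(x_n)$ is again coarsely dense at infinity, since $h$ is coarsely surjective and coarse Lipschitz. After passing to a coarsely dense subsequence, we may assume $g(y_n)/y_n$ converges. Since $[g]\in C([f_\lambda])$, the rigidity theorem forces this limit to be $a$. Hence $g(h(x_n))/x_n\to ab$, so $ab\in I[gh]$.

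The step I expect to be the main obstacle is this passage to a subsequence that is still coarsely dense. Choosing a subsequence with a convergent ratio could destroy density. I would handle it with the commutation relation. From $g(\lambda x)\approx\lambda g(x)$ we get $g(\lambda^k x)/(\lambda^k x)\approx g(x)/x$ up to an error of order $k/(\lambda^k x)$. So the ratio along a sequence is essentially determined on one fundamental domain $[x_0,\lambda x_0]$, and convergence can be propagated along orbits. In fact I would first prove the stronger statement that $g(x)/x\to c_g$ along all $x\to\infty$ whenever $I[g]\ne\varnothing$; the lemma is likely to have this flavour. With that in hand, composition is immediate. Inverses are handled the same way: $g^{-1}(g(x))/g(x)\to 1/a$, so $\Phi([g]^{-1})=1/a$. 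The identity maps to $1$.

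\textbf{Step 2: surjectivity and injectivity.} Surjectivity is explicit: $f_\mu$ commutes with $f_\lambda$ exactly, and $I[f_\mu]=\{\mu\}$. For injectivity, suppose $c_g=1$. We must show $g$ lies at bounded distance from the identity, and only the limit $g(x)/x\to1$ is available, which is weaker. This is where commutation must be used decisively. Write $g(x)=x+\varepsilon(x)$, so that $\varepsilon(\lambda x)=\lambda\varepsilon(x)+O(1)$. Then $\varepsilon(\lambda^k x)/\lambda^k$ is Cauchy, and its limit $\phi(x)$ satisfies $\phi(\lambda x)=\phi(x)$. The condition $\varepsilon(x)/x\to0$ then forces $\phi/x\to0$ along each orbit, which gives $\phi\equiv0$ on a fundamental domain. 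It follows that $|\varepsilon(x)|\le\sum_{j\ge1}\lambda^{-j}O(1)\cdot\lambda^{0}$, which is bounded, by summing the geometric error backwards. Hence $[g]=[\mathrm{id}]$.

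I flag that this telescoping estimate, which requires uniform constants in the commutation, is the step needing the most care. If it fails, the alternative is that the paper's invariant records $g(x_n)/x_n$ with more data, or that $\Phi$ is defined via $\log$ onto $(\mathbb R,+)\cong\mathbb R_+$. The overall strategy stays the same in either case.
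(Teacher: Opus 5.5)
Your outline matches the paper's proof. $\Phi$ is well defined by Theorem~\ref{main2}. Multiplicativity of the invariant gives the subgroup property and the homomorphism; the paper gets closure from the bounded-distance estimate of Lemma~\ref{thm:rigidity-general}, you from pointwise limits, and either works. The dilations $f_a$ give surjectivity. Injectivity comes from telescoping $|u(\lambda x)-\lambda u(x)|\le C$ together with $u(x)/x\to0$ to get $|u|\le C/(\lambda-1)$, as in Lemmas~\ref{lem:kernel1} and~\ref{thm:injective}, with $u=h^{-1}-\mathrm{id}$ in place of your $\varepsilon=g-\mathrm{id}$. Your $I[g]$, built from $g(x_n)/x_n$, agrees with the paper's, built from $x_n/g^{-1}(x_n)$: $y_n=g^{-1}(x_n)$ is again coarsely dense and $x_n=g(y_n)+O(1)$, so $C^{*}([f_\lambda])$ is the same set. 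Two steps need repair, though neither changes the strategy.

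\textbf{The obstacle in Step~1.} This obstacle is not real, and the commutation-based sketch you offer for it is only a heuristic. Its error term is also wrong: telescoping gives $|g(\lambda^k x)/(\lambda^k x)-g(x)/x|\le C/((\lambda-1)x)$ uniformly in $k$, not $O(k/(\lambda^k x))$. The fix is direct, because the limit defining an element of $I[g]$ is taken along the \emph{whole} coarsely dense sequence.
\begin{itemize}
\item For large $t$, pick $n(t)$ with $|x_{n(t)}-t|\le D$.
\item Then $n(t)\to\infty$, and $|g(t)-g(x_{n(t)})|\le KD+A$, where $K,A$ are the quasi-isometry constants of $g$.
\item Hence $g(t)/t\to c_g$ as $t\to\infty$.
\end{itemize}
This uses neither a subsequence nor the commutation hypothesis; in fact it shows $I[g]$ has at most one element for every $[g]\in QI(\mathbb R_+)$. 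It is exactly the move the paper makes at the points $\lambda^m t$ in the proof of Lemma~\ref{thm:injective}.

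\textbf{The injectivity step.} The limit $\phi(x)=\lim_k \varepsilon(\lambda^k x)/\lambda^k$ satisfies $\phi(\lambda x)=\lambda\phi(x)$, not $\phi(\lambda x)=\phi(x)$. With your relation, $\phi(\lambda^k x)/(\lambda^k x)\to0$ holds trivially and does not force $\phi=0$. The correct one-line argument is $\phi(x)=x\lim_k \varepsilon(\lambda^k x)/(\lambda^k x)=0$, because $\varepsilon(s)/s\to0$. Your telescoping sum then gives $|\varepsilon(x)|\le C/(\lambda-1)$ for every $x>0$. The uniformity of $C$ that worries you is automatic, since $[g][f_\lambda]=[f_\lambda][g]$ means $\sup_x|g(\lambda x)-\lambda g(x)|<\infty$. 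So no fallback via $\log$ is needed.
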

	
	Thus, although the ambient group $QI(\mathbb R_+)$ is highly nontrivial, the subgroup of the centralizer detected by the asymptotic invariant admits a remarkably simple algebraic description.
	
	The paper is organized as follows. In Section 2, we recall the necessary background on quasi-isometries of $\mathbb R_+$ and coarsely dense sequences. Section 3 develops the asymptotic invariant and proves the rigidity theorem. In Section 4, we determine the structure of $C^{*}([f_\lambda])$ and establish the main theorem.

\section{Preliminaries}

\subsection{Quasi-isometries and the quasi-isometry group}

Let $(X,d)$ be a metric space. A map $f:X\to X$ is called a \emph{quasi-isometry}
if there exist constants $\mu\ge 1$, $\delta\ge 0$, and $M>0$ such that

\begin{enumerate}
	\item for all $x,y\in X$,
	\[
	\frac{1}{\mu}d(x,y)-\delta \le d(f(x),f(y)) \le \mu d(x,y)+\delta,
	\]
	\item every point of $X$ lies within distance $M$ of the image $f(X)$.
\end{enumerate}

Two quasi-isometries $f,g:X\to X$ are said to be \emph{equivalent}, written
$f\sim g$, if
\[
\sup_{x\in X} d\bigl(f(x),g(x)\bigr)<\infty.
\]
The set of equivalence classes of quasi-isometries under composition forms a
group, denoted by $QI(X)$, called the \emph{quasi-isometry group} of $X$.

Every quasi-isometry $f:X\to X$ admits a quasi-inverse $f^{-1}:X\to X$, unique up
to bounded distance, such that
\[
d\bigl(f^{-1}(f(x)),x\bigr)\le C
\quad\text{and}\quad
d\bigl(f(f^{-1}(x)),x\bigr)\le C
\]
for all $x\in X$ and some constant $C>0$.

\subsection{Quasi-isometries of $\mathbb R_+$}

Throughout this article, we work with the metric space
\[
\mathbb R_+=(0,\infty)
\]
equipped with the standard Euclidean metric. We denote by $QI(\mathbb R_+)$ the
quasi-isometry group of $\mathbb R_+$.\\
A basic family of quasi-isometries of $\mathbb R_+$ is given by the dilations
$
f_\lambda(x)=\lambda x,~ \lambda>0.
$\\
Different values of $\lambda$ define distinct elements of $QI(\mathbb R_+)$.

\subsection{Coarsely dense sequences at infinity}

A sequence $\{x_n\}\subset\mathbb R_+$ is said to be \emph{coarsely dense at
	infinity} if there exist constants $D>0$ and $T_0>0$ such that for every
$t\ge T_0$ there exists an index $n$ with
\[
|x_n-t|\le D.
\]
In particular, every coarsely dense sequence diverges to infinity, though the
converse need not hold.\\ Note that if $\{x_n\}\subset\mathbb R_+$ is coarsely dense at infinity and $g\in QI(\mathbb{R}_+)$, then the
sequence $\{g^{-1}(x_n)\}$ is also coarsely dense at infinity. This follows from
the coarse surjectivity and coarse Lipschitz properties of quasi-isometries.

\subsection{Asymptotic equivalence of sequences}

Two sequences $\{a_n\},\{b_n\}\subset\mathbb R_+$ are said to be
\emph{asymptotically equivalent} if
\[
\lim_{n\to\infty}\frac{a_n}{b_n}=1.
\]
This notion allows one to transfer asymptotic information between sequences
under quasi-isometries.

\subsection{Centralizers in $QI(\mathbb R_+)$}

For an element $[f]\in QI(\mathbb R_+)$, its \emph{centralizer} is defined by
\[
C([f])=\{[g]\in QI(\mathbb R_+): [g][f]=[f][g]\}.
\]
Centralizers provide an important information about the internal algebraic
structure of $QI(\mathbb R_+)$.
In this article, we are primarily interested in centralizers of dilation classes
$[f_\lambda]$ with $\lambda>1$.

	\section{An Asymptotic Invariant and a Rigidity Theorem}

In this section, we introduce an asymptotic invariant associated to quasi-isometries of $\mathbb R_+$ and establish a rigidity theorem for elements of the centralizer of a dilation.

\subsection{An Asymptotic Invariant}\label{Asymptotic Invariant}

For any $[g]\in QI(\mathbb R_+)$, define
$$
I[g]
=
\left\{
\lim_{n\to\infty}
\frac{x_n}{g^{-1}(x_n)}
:
\{x_n\}\text{ is coarsely dense at infinity and the limit exists}
\right\}.
$$
We first show that $I[g]$ depends only on the quasi-isometry class of $g$.

\begin{Prop}
	\label{prop:invariant-well-defined}
	The invariant $I{[g]}$ is well defined on $QI(\mathbb{R}_+)$.
\end{Prop}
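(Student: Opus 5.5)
We need to show that if $g\sim h$ are equivalent quasi-isometries with quasi-inverses $g^{-1}$, $h^{-1}$, then $I[g]=I[h]$. Two ambiguities enter the definition: the choice of representative $g$ of the class, and the choice of quasi-inverse $g^{-1}$, which is only defined up to bounded distance. The plan is to reduce both to one statement. If $[g]=[h]$, then any quasi-inverses $g^{-1}$ and $h^{-1}$ are also at bounded distance, since inverses in the group $QI(\mathbb R_+)$ are unique. Concretely,
\[
|g^{-1}(x)-h^{-1}(x)|\le |g^{-1}(x)-h^{-1}(g(g^{-1}(x)))| + |h^{-1}(g(g^{-1}(x)))-h^{-1}(x)|.
\]
The first term is bounded using $g\sim h$, the coarse Lipschitz bound for $h^{-1}$, and $h^{-1}h\sim \mathrm{id}$. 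The second term is bounded using $g g^{-1}\sim\mathrm{id}$ and the coarse Lipschitz bound for $h^{-1}$. So there is a constant $K$ with $|g^{-1}(x)-h^{-1}(x)|\le K$ for all $x\in\mathbb R_+$.

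Next, fix a sequence $\{x_n\}$ that is coarsely dense at infinity and for which $L=\lim x_n/g^{-1}(x_n)$ exists. By the remark in the preliminaries, $\{g^{-1}(x_n)\}$ is coarsely dense at infinity. In particular $g^{-1}(x_n)\to\infty$, which is the property we actually need; one can also see it directly from the lower quasi-isometry bound, since $x_n\to\infty$. Since $|h^{-1}(x_n)-g^{-1}(x_n)|\le K$, we get
\[
\frac{h^{-1}(x_n)}{g^{-1}(x_n)}\to 1,
\]
so the sequences $\{g^{-1}(x_n)\}$ and $\{h^{-1}(x_n)\}$ are asymptotically equivalent. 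Hence
\[
\frac{x_n}{h^{-1}(x_n)}=\frac{x_n}{g^{-1}(x_n)}\cdot\frac{g^{-1}(x_n)}{h^{-1}(x_n)}\to L.
\]
This gives $I[g]\subseteq I[h]$, and by symmetry the two sets are equal. The same sequence $\{x_n\}$ is used on both sides, so coarse density of the test sequence is preserved automatically.

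The only delicate point is making sure the denominators are eventually nonzero and tend to infinity, so that the bounded error $K$ disappears in the ratio. This is where divergence of $g^{-1}(x_n)$ is used, via $|g^{-1}(x)-g^{-1}(y)|\ge \mu^{-1}|x-y|-\delta$ applied with a fixed base point $y$. The possible value $L=\infty$ or $L=0$ causes no trouble, since the multiplication by a factor tending to $1$ preserves these limits too. Everything else is a routine triangle-inequality computation with the quasi-isometry constants.
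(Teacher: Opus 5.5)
Your proof is correct and follows essentially the same route as the paper. Both arguments first note that quasi-inverses of equivalent representatives are at bounded distance, then keep the same coarsely dense sequence and let that bounded error vanish against a denominator tending to infinity. The differences are cosmetic: the paper works with the reciprocal ratio $g^{-1}(x_n)/x_n\to 1/\lambda$ and divides the error by $x_n$, whereas you multiply by the factor $g^{-1}(x_n)/h^{-1}(x_n)\to 1$; and you actually verify the bound $|g^{-1}-h^{-1}|\le K$, which the paper leaves as ``easily checked''.
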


\begin{proof}
	Suppose $[f]=[g]$. Then it can be easily checked that $f^{-1}\sim g^{-1}$ .\\
	Let $\lambda\in I{[g]}$. By definition, there exists a coarsely dense sequence
	$\{x_n\}$ with $x_n\to\infty$ such that
	\[
	\lim_{n\to\infty}\frac{g^{-1}(x_n)}{x_n}=\frac{1}{\lambda}.
	\]
	Since $|f^{-1}(x_n)-g^{-1}(x_n)|$ is uniformly bounded, we have
	\[
	\left|
	\frac{f^{-1}(x_n)}{x_n}-\frac{1}{\lambda}
	\right|
	\le
	\frac{|f^{-1}(x_n)-g^{-1}(x_n)|}{x_n}
	+
	\left|
	\frac{g^{-1}(x_n)}{x_n}-\frac{1}{\lambda}
	\right|
	\longrightarrow 0~\text{as~}n\to\infty.
	\]
	Thus $\lambda\in I{[f]}$. By symmetry, $I{[f]}\subset I{[g]}$, and hence
	$I{[f]}=I{[g]}$.
\end{proof}
\subsection{Rigidity for Centralizers of Dilations}

We now study the invariant $I[g]$ for elements of the centralizer of a dilation. The following lemmas culminate in a rigidity theorem establishing the uniqueness of the asymptotic invariant.

\begin{Lem}
	\label{lem:scale-matching}
	Let $g:\mathbb{R}_+\to\mathbb{R}_+$ be a quasi-isometry, let $\lambda>1$ be a fixed real number,
	and let $m\in\mathbb{Z}$. Let $\{y_n\}\subset\mathbb{R}_+$ be any sequence with $y_n\to\infty$,
	and let $\{x_n\}\subset\mathbb{R}_+$ be a sequence that is coarsely dense at infinity.
	Then there exists a subsequence $\{x_{r_n}\}$ of $\{x_n\}$ such that
	\[
	\lim_{n\to\infty}\frac{\lambda^m\, g^{-1}(x_{r_n})}{y_n}=1.
	\]
\end{Lem}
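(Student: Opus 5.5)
The plan is to reduce the statement to the remark in the Preliminaries that $\{g^{-1}(x_n)\}$ is coarsely dense at infinity whenever $\{x_n\}$ is, and then pick, for each $n$, an index whose image lies within a bounded distance of the rescaled target $y_n/\lambda^m$.

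First I will justify that remark quantitatively. Let $g^{-1}$ be a quasi-inverse of $g$ with constants $(\mu,\delta)$, such that $g^{-1}(\mathbb R_+)$ is $M$-dense and $|g^{-1}(g(t))-t|\le C$. Let $D,T_0$ be the coarse-density constants of $\{x_n\}$. Fix a large $s$ and put $t=g(s)$. Since $g$ is a quasi-isometry, $g(s)\to\infty$ as $s\to\infty$, so $t\ge T_0$ for $s$ large. Choose $n$ with $|x_n-t|\le D$. Then
\[
|g^{-1}(x_n)-s|\le |g^{-1}(x_n)-g^{-1}(g(s))|+|g^{-1}(g(s))-s|\le \mu D+\delta+C=:D'.
\]
Hence there is $S_0$ such that every $s\ge S_0$ lies within $D'$ of some $g^{-1}(x_n)$.

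Next, set $t_n=y_n/\lambda^m$. Since $\lambda^m>0$ is fixed and $y_n\to\infty$, we have $t_n\to\infty$, so $t_n\ge S_0$ for all $n\ge N_0$. For such $n$, choose an index $r_n$ with $|g^{-1}(x_{r_n})-t_n|\le D'$; for the finitely many $n<N_0$, choose $r_n$ arbitrarily. Then
\[
\left|\frac{\lambda^m g^{-1}(x_{r_n})}{y_n}-1\right|=\frac{|g^{-1}(x_{r_n})-t_n|}{t_n}\le \frac{D'}{t_n}\longrightarrow 0,
\]
which gives the limit $1$.

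The only delicate point is that $\{x_{r_n}\}$ should genuinely be a subsequence, with indices going to infinity (ideally strictly increasing). I would argue that $r_n\to\infty$ as follows. For each fixed $K$, the values $g^{-1}(x_1),\dots,g^{-1}(x_K)$ are bounded, while $g^{-1}(x_{r_n})\ge t_n-D'\to\infty$; hence $r_n>K$ for all large $n$. To make the indices strictly increasing, I would use that each point $s\ge S_0$ has infinitely many admissible indices. Indeed, $x_n\to\infty$ and $t=g(s)$ can be approximated by $x_n$ within $D$, and enlarging $D$ slightly lets us pick $x_n$ with arbitrarily large index near $t$. With this, I can choose $r_n$ inductively with $r_n>r_{n-1}$ at the cost of replacing $D'$ by a larger constant. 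Failing that, the statement only needs $r_n\to\infty$, which the previous argument already provides.

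I expect this indexing issue to be the main obstacle; the analytic estimate itself is routine.
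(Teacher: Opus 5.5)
Your analytic estimate is the paper's argument. The paper sets $t_n=g(\lambda^{-m}y_n)$ and picks $x_{r_n}$ with $|x_{r_n}-t_n|\le D$. It then combines the coarse Lipschitz bound for $g^{-1}$ with $|g^{-1}(g(s))-s|\le C$ to get $|g^{-1}(x_{r_n})-y_n/\lambda^m|\le KD+A+C$. You run the same chain, packaged first as the statement that $\{g^{-1}(x_n)\}$ is coarsely dense at infinity. Your proof that $r_n\to\infty$ is correct, and it goes beyond the paper's proof, which never checks this. Theorem~\ref{main2} needs it to pass from $x_n/g^{-1}(x_n)\to\ell$ to $x_{r_n}/g^{-1}(x_{r_n})\to\ell$.

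The paragraph on making the indices strictly increasing is wrong, though. If $x_n\to\infty$ (which you invoke), then for any fixed $t$ and any constant $D''$ only finitely many indices satisfy $|x_n-t|\le D''$. So no point has infinitely many admissible indices, and enlarging $D$ does not help. In fact no argument can give strictly increasing indices in general. Take $g=\mathrm{id}$, $m=0$, $x_n=n$ and $y_n=\log(n+1)$: any strictly increasing $r_n$ has $r_n\ge n$, so $x_{r_n}/y_n\ge n/\log(n+1)\to\infty$. The lemma is therefore only true with ``subsequence'' read as a choice of indices $r_n$ with $r_n\to\infty$. That is exactly your fallback, and it is all the paper uses later. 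Drop the monotonicity paragraph and state the conclusion in that form.
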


\begin{proof}
	Since $g\in QI(\mathbb{R}_+)$, its quasi-inverse $g^{-1}$ is also a quasi-isometry.
	Hence there exists a constant $C>0$ such that for all sufficiently large $t$,
	\[
	|g^{-1}(g(t)) - t| \le C
	\quad \text{and} \quad
	|g(g^{-1}(t)) - t| \le C.
	\]
	Moreover, there exist constants $K\ge1$ and $A>0$ such that
	\[
	|g^{-1}(u) - g^{-1}(v)| \le K|u-v| + A
	\quad \text{for all sufficiently large } u,v\in\mathbb{R}_+.
	\]
	Since the sequence $\{x_n\}$ is coarsely dense at infinity, there exist constants
	$D>0$ and $T_0>0$ such that for every $t\ge T_0$ there is an index $n$ with
	$|x_n - t|\le D$.
    Fix $m\in\mathbb{Z}$ and define
	$t_n := g(\lambda^{-m} y_n).$ Since $y_n\to\infty$ and $g$ is a quasi-isometry, we have $t_n\to\infty$.
	By coarse density, for all sufficiently large $n$ we may choose an index $r_n$
	such that
	$|x_{r_n} - t_n| \le D.$
	Applying the quasi-Lipschitz property of $g^{-1}$ yields
	\[
	|g^{-1}(x_{r_n}) - g^{-1}(t_n)|
	\le K|x_{r_n}-t_n| + A
	\le KD + A.
	\]
	On the other hand, since $t_n = g(\lambda^{-m}y_n)$ and $g^{-1}$ is a quasi-inverse of $g$,
	we have
	\[
	|g^{-1}(t_n) - \lambda^{-m}y_n| \le C.
	\]
	Combining the above estimates, there exists a constant $M>0$ such that
	\[
	\left| g^{-1}(x_{r_n}) - \frac{y_n}{\lambda^m} \right| \le M
	\quad \text{for all sufficiently large } n.
	\]
	Therefore,
	\[
	\left| \frac{\lambda^m g^{-1}(x_{r_n})}{y_n} - 1 \right|
	= \frac{\lambda^m}{y_n}
	\left| g^{-1}(x_{r_n}) - \frac{y_n}{\lambda^m} \right|
	\le \frac{\lambda^m M}{y_n}.
	\]
	Since $y_n\to\infty$, the right-hand side tends to $0$, and hence
	\[
	\lim_{n\to\infty}\frac{\lambda^m\, g^{-1}(x_{r_n})}{y_n}=1.\]
\end{proof}

The next lemma shows that asymptotic ratios are stable under asymptotic
equivalence of sequences.

\begin{Lem}
	\label{lem:stability}
	Let $g:\mathbb{R}_+\to\mathbb{R}_+$ be a quasi-isometry.
	Let $\{a_n\}$ and $\{b_n\}$ be sequences in $\mathbb{R}_+$ such that $a_n\to\infty$,
	$b_n\to\infty$, and
	$
	\displaystyle\lim_{n\to\infty}\displaystyle\frac{a_n}{b_n}=1.
	$
	Suppose further that the limit
	$
	\displaystyle\lim_{n\to\infty}\displaystyle\frac{g(a_n)}{a_n}=\ell
	$
	exists. Then
	$\displaystyle\lim_{n\to\infty}\frac{g(b_n)}{b_n}=\ell.$
\end{Lem}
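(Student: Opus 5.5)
The plan is to compare $g(b_n)/b_n$ with $g(a_n)/a_n$ directly, using only the coarse Lipschitz upper bound in the definition of a quasi-isometry. Let $\mu\ge 1$ and $\delta\ge 0$ be the constants of $g$. Then for all $u,v\in\mathbb{R}_+$,
\[
|g(u)-g(v)|\le \mu|u-v|+\delta .
\]
This inequality holds globally by the definition in Section 2, so no ``sufficiently large'' restriction is needed here, in contrast with Lemma \ref{lem:scale-matching}.

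The key step is an algebraic splitting of the target ratio:
\[
\frac{g(b_n)}{b_n}
=\frac{g(a_n)}{a_n}\cdot\frac{a_n}{b_n}
+\frac{g(b_n)-g(a_n)}{b_n}.
\]
For the first term, $g(a_n)/a_n\to\ell$ by hypothesis and $a_n/b_n\to 1$ by asymptotic equivalence. Hence the product tends to $\ell$.

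For the second term, the coarse Lipschitz bound gives
\[
\left|\frac{g(b_n)-g(a_n)}{b_n}\right|
\le \mu\left|\frac{a_n}{b_n}-1\right|+\frac{\delta}{b_n}.
\]
Both summands on the right tend to $0$: the first because $a_n/b_n\to 1$, and the second because $b_n\to\infty$. Adding the two limits yields $\lim_{n\to\infty} g(b_n)/b_n=\ell$.

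I do not expect a genuine obstacle. The one point to get right is that the difference $|a_n-b_n|$ itself may be unbounded, since only the ratio $a_n/b_n$ tends to $1$. For this reason the Lipschitz estimate has to be normalized by $b_n$, rather than bounding $|g(b_n)-g(a_n)|$ by a constant. With that normalization the linear term $\mu|a_n-b_n|/b_n$ is still controlled by $|a_n/b_n-1|$, and the argument goes through.
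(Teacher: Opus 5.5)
Your proof is correct and follows essentially the paper's argument. The paper writes $\frac{g(b_n)}{b_n}-\frac{g(a_n)}{a_n}$ as $\frac{g(b_n)-g(a_n)}{b_n}+\frac{g(a_n)}{a_n}\cdot\frac{a_n-b_n}{b_n}$, which is your identity with the product term $\frac{g(a_n)}{a_n}\cdot\frac{a_n}{b_n}$ expanded around $\frac{g(a_n)}{a_n}$, and it bounds the Lipschitz piece by the same estimate $\frac{K|b_n-a_n|+C}{b_n}\to 0$, normalized by $b_n$ exactly as you do.
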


\begin{proof}
	Since $g$ is a quasi-isometry, there exist constants $K\ge1$ and $C>0$ such that
	for all sufficiently large $u,v\in\mathbb{R}_+$,
	$|g(u)-g(v)| \le K|u-v| + C$.
	Fix $n$ sufficiently large. Then we obtain,
	

	\[
	\left|
	\frac{g(b_n)}{b_n} - \ell
	\right|
	\le
	\left|
	\frac{g(b_n)}{b_n} - \frac{g(a_n)}{a_n}
	\right|
	+
	\left|
	\frac{g(a_n)}{a_n} - \ell
	\right|.
	\tag{1}
	\]
	By assumption, the second term in (1) tends to zero as $n\to\infty$.
	Thus it suffices to show that
	\[
	\left|
	\frac{g(b_n)}{b_n} - \frac{g(a_n)}{a_n}
	\right|
	\longrightarrow 0.
	\]
	We write
	\[
	\begin{aligned}
		\left|
		\frac{g(b_n)}{b_n} - \frac{g(a_n)}{a_n}
		\right|
		&=
		\left|
		\frac{a_n g(b_n) - b_n g(a_n)}{a_n b_n}
		\right| \\
		&\le
		\frac{|a_n(g(b_n)-g(a_n))|}{a_n b_n}
		+
		\frac{|g(a_n)(a_n-b_n)|}{a_n b_n}.
	\end{aligned}
	\tag{2}
	\]
	For the first term in (2), using the quasi-Lipschitz property of $g$,
	we obtain
	\[
	\frac{|a_n(g(b_n)-g(a_n))|}{a_n b_n}
	\le
	\frac{K|b_n-a_n| + C}{b_n}.
	\]
	Since $\displaystyle\frac{a_n}{b_n}\to1$, we have,
	\[
	\frac{K|b_n-a_n| + C}{b_n} \longrightarrow 0.
	\tag{3}
	\]
	For the second term in (2), we write
	\[
	\frac{|g(a_n)(a_n-b_n)|}{a_n b_n}
	=
	\frac{g(a_n)}{a_n}\cdot\frac{|a_n-b_n|}{b_n}.
	\]
	By assumption, $\displaystyle\frac{g(a_n)}{a_n}\to\ell$, hence it is bounded,
	and again $\displaystyle\frac{|a_n-b_n|}{b_n}\to0$. Therefore,
	\[
	\frac{|g(a_n)(a_n-b_n)|}{a_n b_n} \longrightarrow 0.
	\tag{4}
	\]
	Combining (2), (3), and (4), we conclude that
	\[
	\left|
	\frac{g(b_n)}{b_n} - \frac{g(a_n)}{a_n}
	\right|
	\longrightarrow 0.
	\]
	Together with (1), this yields
	$
	\displaystyle\lim_{n\to\infty}\displaystyle\frac{g(b_n)}{b_n}=\ell.
	$
	
\end{proof}


Combining scale matching with stability of asymptotic ratios, we now establish
the main rigidity result for centralizers of dilations.

\begin{Thm}\label{main2}
	Let $f:\mathbb{R}_+\to\mathbb{R}_+$ be the dilation $f(x)=\lambda x$ with $\lambda>1$,
	and let $g\in QI(\mathbb{R}_+)$ satisfy $[g][f]=[f][g]$ in $QI(\mathbb{R}_+)$.
	Then the invariant $I{[g]}$ contains at most one element.
\end{Thm}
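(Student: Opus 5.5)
The plan is to show directly that any two elements $\ell_1,\ell_2\in I[g]$ coincide, by comparing the two defining coarsely dense sequences through their preimages under $g^{-1}$. The coarse commutation hypothesis $[g][f]=[f][g]$ should not really be needed for this uniqueness step. Coarse density alone forces all convergent asymptotic ratios to agree; the hypothesis will matter later, in showing that the common value behaves well.

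\emph{Setup.} Suppose $\ell_1,\ell_2\in I[g]$, realized by coarsely dense sequences $\{x_n\}$ and $\{x'_n\}$, so that
\[
\frac{x_n}{g^{-1}(x_n)}\to\ell_1,\qquad \frac{x'_n}{g^{-1}(x'_n)}\to\ell_2 .
\]
Put $u_n=g^{-1}(x_n)$ and $u'_n=g^{-1}(x'_n)$. Both sequences tend to infinity and, by the remark in the preliminaries, are coarsely dense at infinity. Since $|g(u_n)-x_n|\le C$ and $x_n\to\infty$, we get $g(u_n)/u_n\to\ell_1$, and likewise $g(u'_n)/u'_n\to\ell_2$. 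The only care needed is to check that $\ell_1,\ell_2$ are positive and finite, which follows from the quasi-isometry bounds. The problem is thus recast as one about the ratio $g(u)/u$ along two sequences to which Lemma~\ref{lem:stability} can be applied.

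\emph{Matching.} Apply Lemma~\ref{lem:scale-matching} with $m=0$, $y_n=u_n$, and the coarsely dense sequence $\{x'_n\}$. This gives indices $r_n$ with $g^{-1}(x'_{r_n})/u_n\to1$, that is, $u'_{r_n}/u_n\to1$. We must check that $r_n\to\infty$. This holds because $x'_{r_n}$ lies within bounded distance of $g(u_n)\to\infty$, and any finite set of indices gives bounded values. Hence $\{u'_{r_n}\}$ is a genuine subsequence (after passing to a further subsequence with increasing indices if needed), and along it $g(u'_{r_n})/u'_{r_n}\to\ell_2$.

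\emph{Conclusion.} Now apply Lemma~\ref{lem:stability} with $a_n=u'_{r_n}$ and $b_n=u_n$. It gives $g(u_n)/u_n\to\ell_2$, and therefore $\ell_1=\ell_2$. The main obstacle is bookkeeping rather than conceptual. Lemma~\ref{lem:stability} is stated for $g$, while the invariant is phrased through $g^{-1}$. So one must carefully translate $x_n/g^{-1}(x_n)$ into $g(u_n)/u_n$ with an $O(1/x_n)$ error. One must also ensure that the matched indices form a sequence along which the limit $\ell_2$ is still valid. If one prefers to use the centralizer hypothesis, an alternative route is to observe that $g(\lambda^m y)=\lambda^m g(y)+O_m(1)$, so $g(y)/y$ is asymptotically $\lambda$-scale invariant. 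However, the argument above should already suffice.
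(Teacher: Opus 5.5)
Your proof is correct, and it proves more than the statement asks. Nothing in it uses $[g][f]=[f][g]$, so it shows that $I[g]$ has at most one element for every $[g]\in QI(\mathbb{R}_+)$.

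The paper uses the same two lemmas but arranges them differently. It matches an arbitrary sequence $y_n$ with $\lambda^m g^{-1}(x_{r_n})$ and transfers the limit through $|g(\lambda^m s)-\lambda^m g(s)|\le C_m$. Since $m\in\mathbb{Z}$ is free and $m=0$ makes that estimate vacuous, the centralizer hypothesis does no real work in the paper's proof either. Your version makes this explicit: the hypothesis is genuinely needed only in Section 4, for the boundedness of $u$ in Lemmas~\ref{thm:rigidity-general} and~\ref{thm:injective}.

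Your choice $b_n=u_n=g^{-1}(x_n)$ also keeps the bookkeeping honest. The paper proves $g(y_n)/y_n\to\ell$ and then asserts $y_n/g^{-1}(y_n)\to\ell$, but its estimate actually concerns $g(z_n)/z_n$ with $z_n=g^{-1}(y_n)$. So its last step has to be rerun on $z_n$. The paper also uses $r_n\to\infty$ without comment, whereas you verify it.

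What the paper's formulation buys in return is a stronger conclusion: $\ell\in I[g]$ forces $t/g^{-1}(t)\to\ell$ along \emph{every} sequence $t\to\infty$. Lemma~\ref{lemma:composition} tacitly relies on this. Your matching step delivers the same if you replace $u_n$ by an arbitrary sequence tending to infinity.

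A shorter route to uniqueness also exists. Pick $r_n$ with $|x'_{r_n}-x_n|\le D$ and apply the coarse Lipschitz bound for $g^{-1}$. This gives $x'_{r_n}/g^{-1}(x'_{r_n})-x_n/g^{-1}(x_n)\to 0$ directly, without either lemma.
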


\begin{proof}
	Assume that there exists a coarsely dense sequence $\{x_n\}\subset\mathbb{R}_+$
	such that
	\[
	\lim_{n\to\infty}\frac{x_n}{g^{-1}(x_n)}=\ell.
	\]
	Let $\{y_n\}\subset\mathbb{R}_+$ be any sequence for which the limit
	$$\lim_{n\to\infty}\frac{y_n}{g^{-1}(y_n)}$$
	exists. We show that this limit must also be equal to $\ell$.	By Lemma~\ref{lem:scale-matching}, applied to the coarsely dense sequence
	$\{x_n\}$ and the sequence $\{y_n\}$, there exists a subsequence $\{x_{r_n}\}$
	such that
	\[
	\lim_{n\to\infty}\frac{\lambda^m g^{-1}(x_{r_n})}{y_n}=1.
	\]
	In particular, $\lambda^m g^{-1}(x_{r_n})$ and $y_n$ are asymptotically equivalent.\\
	Since $[g][f]=[f][g]$, for each $m\in\mathbb{Z}$ there exists a constant $C_m>0$
	such that
	\[
	|f^m(g(t)) - g(f^m(t))| \le C_m
	\quad \text{for all sufficiently large } t.
	\]
	Fix such an integer $m$. Note that $f^m(x)=\lambda^m x$ for all $x\in\mathbb{R}_+$. Then
	\[
	|g(\lambda^m s) - \lambda^m g(s)| \le C_m
	\quad \text{for all sufficiently large } s.
	\]
	Applying this with $s=g^{-1}(x_{r_n})$, we obtain
	\[
	|g(\lambda^m g^{-1}(x_{r_n})) - \lambda^m g(g^{-1}(x_{r_n}))| \le C_m.
	\]
	Since $g^{-1}$ is a quasi-inverse of $g$, there exists $A>0$ such that
	\[
	|g(g^{-1}(x_{r_n})) - x_{r_n}| \le A.
	\]
	Combining the above estimates yields
	\[
	|g(\lambda^m g^{-1}(x_{r_n})) - \lambda^m x_{r_n}|
	\le C_m + \lambda^m A.
	\]
	Dividing by $\lambda^m g^{-1}(x_{r_n})$ and using $g^{-1}(x_{r_n})\to\infty$,
	we conclude that

	\[
	\left|
	\frac{g(\lambda^m g^{-1}(x_{r_n}))}{\lambda^m g^{-1}(x_{r_n})}
	-
	\frac{\lambda^m x_{r_n}}{\lambda^m g^{-1}(x_{r_n})}
	\right|
	\le
	\frac{ C_m + \lambda^m A}{\lambda^m g^{-1}(x_{r_n})}.
	\]
	Since $g^{-1}(x_{r_n})\to\infty$, the right-hand side tends to $0$, and therefore
	\[
	\lim_{n\to\infty}
	\frac{g(\lambda^m g^{-1}(x_{r_n}))}{\lambda^m g^{-1}(x_{r_n})}
	=
	\lim_{n\to\infty}\frac{x_{r_n}}{g^{-1}(x_{r_n})}
	=\ell.
	\]
	Using the asymptotic equivalence
	$\lambda^m g^{-1}(x_{r_n}) \sim y_n$ and Lemma~\ref{lem:stability}, we conclude that
	\[
	\lim_{n\to\infty}\frac{g(y_n)}{y_n}=\ell.
	\]
	Finally, since $|g(g^{-1}(y_n)) - y_n| \le A$ for all sufficiently large $n$,
	we obtain
	\[
	\left|
	\frac{y_n}{g^{-1}(y_n)}
	-
	\frac{g(g^{-1}(y_n))}{g^{-1}(y_n)}
	\right|
	\le
	\frac{A}{g^{-1}(y_n)}
	\longrightarrow 0.
	\]
	Hence
	\[
	\lim_{n\to\infty}\frac{y_n}{g^{-1}(y_n)}=\ell.
	\]
	Therefore, any convergent limit defining an element of $I_{[g]}$ must equal
	$\ell$, and consequently $I{[g]}$ contains at most one element.
\end{proof}

\begin{Rem}
	The hypothesis of Theorem~\ref{main2} is not necessary for uniqueness of the
	asymptotic ratio. Consider the maps $f(x)=2x$ and $g(x)=2x+\log x$ on
	$\mathbb{R}_+$. It can be easily checked that both $f$ and $g$ are in $QI(\mathbb{R}_{+})$. Then
	\[
	\lim_{x\to\infty}\frac{f(x)}{x}=2
	\quad\text{and}\quad
	\lim_{x\to\infty}\frac{g(x)}{x}
	=
	2+\lim_{x\to\infty}\frac{\log x}{x}
	=2.
	\]
	Therefore, for every sequence $\{x_n\}$ with $x_n\to\infty$, we have
	\[
	\left|
	\frac{x_n}{g^{-1}(x_n)} -
	\frac{g(g^{-1}(x_n))}{g^{-1}(x_n)}
	\right|
	\le
	\frac{C}{g^{-1}(x_n)},
	\]
	where $C>0$ is a constant depending only on $g$. Since $g^{-1}(x_n)\to\infty$,
	the right-hand side tends to $0$, and therefore
	\[
	\lim_{n\to\infty}\frac{x_n}{g^{-1}(x_n)}=2.
	\]
	However,
	\[
	|f\circ g(x)-g\circ f(x)|
	=
	4x+2\log x-4x-\log(2x)
	=
	\log\!\left(\frac{x}{2}\right)
	\longrightarrow\infty
	\quad\text{as }x\to\infty,
	\]
	and therefore $[g]\notin C([f])$. This example shows that uniqueness of the
	asymptotic invariant alone does not imply coarse commutation with a dilation.
\end{Rem}
The previous remark illustrates that the centralizer condition is not necessary for uniqueness of the asymptotic ratio. We now show that it is not sufficient for existence of the invariant either. Indeed there exists $[g]\in C([f_\lambda])$ for which $I[g]=\varnothing$.
\begin{Ex}\label{Ig}
	Let $\lambda>1$ and let
	\[
	g(x)=x\left(1+\varepsilon\sin\left(\frac{2\pi\log x}{\log\lambda}\right)\right),
	\quad x>0, \text{ where } 0<\varepsilon<
	\frac{1}{1+\frac{2\pi}{\log\lambda}}.
	\]
	Then $g$ is a bi-Lipschitz self-homeomorphism of $\mathbb{R}_+$ satisfying
	$
	g(\lambda x)=\lambda g(x)
	$
	for all $x>0$. Consequently, $[g]\in C([f_\lambda])$, where
	$f_\lambda(x)=\lambda x$. Moreover,
	$I[g]=\varnothing$.
	
A direct computation gives
\[
g'(x)
=
1+\varepsilon\sin\!\left(\frac{2\pi\log x}{\log\lambda}\right)
+\frac{2\pi\varepsilon}{\log\lambda}
\cos\!\left(\frac{2\pi\log x}{\log\lambda}\right).
\]
Therefore,
\[
1-\varepsilon-\frac{2\pi\varepsilon}{\log\lambda}
\le g'(x)\le
1+\varepsilon+\frac{2\pi\varepsilon}{\log\lambda},
\]
and the choice of $\varepsilon$ implies that $g'(x)$ is bounded above and below by positive constants. Therefore $g$ is bi-Lipschitz.
Furthermore,
\[
g(\lambda x)
=
\lambda x
\left(
1+\varepsilon
\sin\left(
\frac{2\pi\log(\lambda x)}{\log\lambda}
\right)
\right)
=
\lambda x
\left(
1+\varepsilon
\sin\left(
2\pi+\frac{2\pi\log x}{\log\lambda}
\right)
\right)
=
\lambda g(x),
\]
so $g$ commutes with $f_\lambda$, and hence $[g]\in C([f_\lambda])$.

To show that $I[g]=\varnothing$, let $\{x_n\}$ be an arbitrary coarsely dense sequence at infinity. Then there exists $D>0$ such that for every sufficiently large $t>0$ there is an index $n$ with $|x_n-t|\le D$.
For each $k\ge1$, set
\[
a_k=\lambda^k,
\qquad
b_k=\lambda^{k+\frac14}.
\]
By coarse density, there exist subsequences $\{x_{n_k}\}$ and $\{x_{m_k}\}$ satisfying
\[
|x_{n_k}-a_k|\le D,
\qquad
|x_{m_k}-g(b_k)|\le D.
\]
Since $g(a_k)=a_k$ and $g(b_k)=(1+\varepsilon)b_k$, the bi-Lipschitz property of $g$ yields constants $M,N>0$ such that
\[
|g^{-1}(x_{n_k})-a_k|\le M,
\qquad
|g^{-1}(x_{m_k})-b_k|\le N.
\]
Consequently, by using above relations, 
\[
\frac{x_{n_k}}{g^{-1}(x_{n_k})}\longrightarrow 1,~\text{as}~k\to\infty,
\text{ while, }
\frac{x_{m_k}}{g^{-1}(x_{m_k})}\longrightarrow 1+\varepsilon,~\text{as}~k\to\infty.
\]
Since $1\neq 1+\varepsilon$, the sequence $\left\{\displaystyle\frac{x_n}{g^{-1}(x_n)}\right\}$ cannot converge. As the coarsely dense sequence $\{x_n\}$ was arbitrary, no coarsely dense sequence yields a convergent asymptotic ratio. Therefore
$I[g]=\varnothing$.
\end{Ex}
\begin{Rem}
	The coarse density condition is essential in the definition of $I[g]$. If one allows arbitrary sequences tending to infinity, then one may consider
	\[
	S_g=
	\left\{
	\lim_{n\to\infty}\frac{x_n}{g^{-1}(x_n)}
	:
	x_n\to\infty,\,
	\frac{x_n}{g^{-1}(x_n)}
	\text{ converges}
	\right\}.
	\]
	Observe that
	\[
	\frac{x_n}{g^{-1}(x_n)}
	\sim
	\frac{g(g^{-1}(x_n))}
	{g^{-1}(x_n)}.
	\]
	Hence, after writing $y_n=g^{-1}(x_n)$, we obtain
	\[
	S_g=
	\left\{
	\lim_{n\to\infty}\frac{g(y_n)}{y_n}
	:
	y_n\to\infty,\,
	\frac{g(y_n)}{y_n}
	\text{ converges}
	\right\},
	\]
	which coincides with the asymptotic scaling invariant introduced in \cite{Bhowmik Chakraborty 2}.
	If we consider the same $g$ defined as in Example \ref{Ig} then $I[g]=\varnothing$ where as $S_g$ is uncountable.
	For instance, $S_g\subseteq [1-\varepsilon,1+\varepsilon]$. Also
	let $t\in[-1,1]$. Choose $\theta\in[-\pi/2,\pi/2]$ such that
	$\sin\theta=t$, and define
	$x_n=\lambda^{\,n+\frac{\theta}{2\pi}}$. One can check that every point of the interval
	$[1-\varepsilon,1+\varepsilon]$
	belongs to $S_g$. Thus $S_g=[1-\epsilon,1+\epsilon]$.
\end{Rem}

	
	
	
	
	

\section{A Structure Theorem for a Subgroup of the Centralizer of a Dilation}

In the previous section, we introduced the asymptotic invariant $I[g]$ and established a rigidity theorem for quasi-isometries that coarsely commute with a dilation. In particular, Theorem 3.5 shows that if $[g]\in C([f_\lambda])$, then $I[g]$ contains at most one element. Thus, whenever the invariant exists, it determines a unique asymptotic scaling factor associated to $[g]$.
Since $I[g]$ need not be non-empty for every element $[g]$ of $C([f_\lambda])$, we restrict our attention to the subset
\[C^{*}([f_\lambda])=\{[g]\in C([f_\lambda]):I([g])\neq\varnothing\}.\]The purpose of this section is to show that the asymptotic invariant completely determines the algebraic structure of $C^{*}([f_\lambda])$. The following results shows that $C^{*}([f_\lambda])$ is indeed a subgroup of
$C([f_\lambda]).$

\begin{Lem}
	\label{thm:rigidity-general}
	Let $[g]\in C^{*}([f_\lambda])$ and suppose
	$
	I[g]=\{a\}.
	$
	Then there exists a constant $M>0$ such that for all $x$,
	\[
	\left|g^{-1}(x)-\frac{x}{a}\right|\le M.
	\]
\end{Lem}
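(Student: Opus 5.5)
The plan is to first show that $g$ is within bounded distance of the linear map $y\mapsto ay$, and then transfer this estimate to the quasi-inverse $g^{-1}$.

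\emph{Step 1 (global asymptotic slope).} Since $I[g]=\{a\}$, there is a coarsely dense sequence $\{x_n\}$ with $x_n/g^{-1}(x_n)\to a$. The argument in the proof of Theorem~\ref{main2}, via Lemma~\ref{lem:scale-matching} and Lemma~\ref{lem:stability}, shows more than stated: for \emph{every} sequence $y_n\to\infty$ we get $g(y_n)/y_n\to a$. This is because the chain of estimates there never used convergence of anything along $\{y_n\}$; it was only applied with $m=0$ or any fixed $m$. Since this holds for every sequence tending to infinity, we conclude
\[
\lim_{y\to\infty}\frac{g(y)}{y}=a .
\]
I will record this as the first step, re-running that argument with $m=0$.

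\emph{Step 2 (telescoping against the dilation).} Put $e(y)=g(y)-ay$, so that $e(y)=o(y)$. Coarse commutation with $f_\lambda$ (the case $m=1$ in the proof of Theorem~\ref{main2}) gives a constant $C$ with $|g(\lambda s)-\lambda g(s)|\le C$ for all $s\ge s_0$. Since $a\lambda s-\lambda a s=0$, this is equivalent to
\[
|e(\lambda s)-\lambda e(s)|\le C \qquad (s\ge s_0).
\]
Dividing by $\lambda^{k+1}$ and applying this at $s=\lambda^k y$ gives
\[
\Bigl|\frac{e(\lambda^{k+1}y)}{\lambda^{k+1}}-\frac{e(\lambda^k y)}{\lambda^k}\Bigr|\le \frac{C}{\lambda^{k+1}} .
\]
Summing over $k\ge 0$ yields, for every $y\ge s_0$,
\[
\Bigl|e(y)-\lim_{k\to\infty}\frac{e(\lambda^k y)}{\lambda^k}\Bigr|\le \frac{C}{\lambda-1}.
\]
The limit here equals $y\cdot\lim_k e(\lambda^k y)/(\lambda^k y)=0$ by Step 1. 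Hence $|g(y)-ay|\le C/(\lambda-1)$ for all $y\ge s_0$. I expect this to be the main point: it relies on the commutation constant being uniform over all large $s$, which is exactly what $[g][f_\lambda]=[f_\lambda][g]$ provides.

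\emph{Step 3 (transfer to $g^{-1}$ and small $x$).} For $x$ large, $g^{-1}(x)\ge s_0$ because $g^{-1}$ is a quasi-isometry diverging at infinity. Then
\[
|x-a\,g^{-1}(x)|\le |x-g(g^{-1}(x))|+|g(g^{-1}(x))-a\,g^{-1}(x)|\le A+\frac{C}{\lambda-1},
\]
and dividing by $a$ gives the bound for large $x$. For $x$ in the remaining bounded range $(0,X_0]$, the coarse Lipschitz property of $g^{-1}$ shows that $g^{-1}$ is bounded there, and so is $x/a$. Enlarging $M$ to cover this range gives the claimed estimate for all $x$.
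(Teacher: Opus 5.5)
Your proof is correct and follows essentially the same route as the paper: it combines the sublinearity supplied by $I[g]=\{a\}$ with the telescoped coarse commutation estimate against $f_\lambda$, which bounds the linear error by $C/(\lambda-1)$. The differences are cosmetic. You work with $e(y)=g(y)-ay$ and transfer to $g^{-1}$ at the end, whereas the paper works directly with $u(x)=g^{-1}(x)-x/a$, using that $[g^{-1}]$ also commutes with $[f_\lambda]$. You also obtain $e(y)=o(y)$ from the $m=0$ case of the argument of Theorem~\ref{main2}, while the paper uses the direct coarse-density estimate along $\lambda^m t$ from the proof of Lemma~\ref{thm:injective}.
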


\begin{proof}
	Define
	$$u(x)=g^{-1}(x)-\frac{x}{a}.$$
	Since $[g]\in C([f_\lambda])$, there exists a constant $C>0$ such that
	$|g^{-1}(\lambda x)-\lambda g^{-1}(x)|\le C,~\text{for~ all}~x.$
	 Substituting
	\[
	g^{-1}(x)=\frac{x}{a}+u(x),
	\]
	we obtain
	$|u(\lambda x)-\lambda u(x)|\le C$.
		Replacing $x$ by $\lambda^k x$, where $k\ge 0$ and 	summing over $k=0,1,\ldots,m-1$ and applying the triangle inequality, 
	\[
	\left|
	\frac{u(\lambda^{m}x)}{\lambda^{m}}
	-u(x)
	\right|
	\le
	\frac{C}{\lambda-1}
	\]
	for all  $x$ and every $m\in\mathbb N$.
	Since $I[g]=\{a\}$, there exists a coarsely dense sequence $\{x_n\}$ such that
	\[
	\lim_{n\to\infty}\frac{x_n}{g^{-1}(x_n)}=a.
	\]
	Using
	\[
	g^{-1}(x_n)=\frac{x_n}{a}+u(x_n), \qquad \text{ we get }
	\frac{u(x_n)}{x_n}\longrightarrow 0.
	\]
	Then from above,
	$|u(x)|\le \frac{C}{\lambda-1}$
	for all  $x$ (for detailed see proof of Lemma \ref{thm:injective} ). This implies $u$ is bounded, and therefore
	\[
	\left|g^{-1}(x)-\frac{x}{a}\right|
	\le M
	\]
	for some constant $M>0$ and for all $x$.
\end{proof}
\begin{Cor}
	\label{cor:subgroup}
	The set $C^{*}([f_\lambda])$ is a subgroup of $C([f_\lambda])$.
\end{Cor}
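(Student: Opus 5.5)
Since $C([f_\lambda])$ is a subgroup of $QI(\mathbb R_+)$, it suffices to show three things: that $C^{*}([f_\lambda])$ contains the identity, that it is closed under composition, and that it is closed under taking quasi-inverses. The tool throughout is Lemma~\ref{thm:rigidity-general}. It turns the condition $I[g]=\{a\}$ into the coarse statement that $g^{-1}$ lies within bounded distance of the linear map $x\mapsto x/a$. Put differently, $[g^{-1}]=[f_{1/a}]$ and hence $[g]=[f_a]$ in $QI(\mathbb R_+)$.

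I would first show that every dilation $f_c$ with $c>0$ lies in $C^{*}([f_\lambda])$. Dilations commute exactly, so $[f_c]\in C([f_\lambda])$. A quasi-inverse of $f_c$ is $f_{1/c}$, so for the coarsely dense sequence $x_n=n$ the ratio $x_n/f_c^{-1}(x_n)$ is identically $c$. By Proposition~\ref{prop:invariant-well-defined} the choice of representative of the quasi-inverse does not matter, so $c\in I[f_c]$ and $I[f_c]\neq\varnothing$. Taking $c=1$ gives the identity.

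Next take $[g],[h]\in C^{*}([f_\lambda])$ with $I[g]=\{a\}$ and $I[h]=\{b\}$. Lemma~\ref{thm:rigidity-general} gives $g^{-1}\sim f_{1/a}$ and $h^{-1}\sim f_{1/b}$. Since quasi-inverses are unique up to bounded distance, this yields $g\sim f_a$ and $h\sim f_b$. Then $[g][h]=[f_a][f_b]=[f_{ab}]$, and $[g]^{-1}=[f_{1/a}]$. Both are dilation classes, so by the previous paragraph both lie in $C^{*}([f_\lambda])$. Membership in $C([f_\lambda])$ is also automatic, because the centralizer is a subgroup.

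The step I expect to need care is passing from $g^{-1}\sim f_{1/a}$ to $g\sim f_a$. The plan is to use $|g(g^{-1}(x))-x|\le A$ together with the quasi-Lipschitz bound on $g$. This gives $|g(x/a)-x|\le KM+\delta+A$, and substituting $x=ay$ shows that $g$ lies within bounded distance of $f_a$.

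I also need $a>0$, which is required for $f_{1/a}$ to make sense. This holds because $g^{-1}$ is a quasi-isometry and so has $\liminf x/g^{-1}(x)>0$ along sequences tending to infinity.
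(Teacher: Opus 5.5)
Your proof is correct and takes the same route as the paper: both use Lemma~\ref{thm:rigidity-general} to put $g^{-1}$ within bounded distance of $x\mapsto x/a$, and then check the identity, products and inverses. Your repackaging as $[g]=[f_a]$ makes explicit, through your quasi-Lipschitz estimate, the step $g\sim f_a$ that the paper compresses into ``Similarly'' when it asserts $x/g(x)\to 1/a$, and it shows outright that $C^{*}([f_\lambda])$ is exactly the set of dilation classes $\{[f_c]:c>0\}$.
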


\begin{proof}
	Let $[g],[h]\in C^{*}([f_\lambda])$ and suppose
	$I[g]=\{a\}$ and
	$I[h]=\{b\}$.
	By Lemma~\ref{thm:rigidity-general} for all  $x$
	\[
	\left|g^{-1}(x)-\frac{x}{a}\right|\le M,
	\qquad
	\left|h^{-1}(x)-\frac{x}{b}\right|\le N
	\]
	 It follows that for all  $x$,
	\[
	\left|(gh)^{-1}(x)-\frac{x}{ab}\right|
	\le
	N+\frac{M}{b} \text{ hence, }
	\lim_{x\to\infty}
	\frac{x}{(gh)^{-1}(x)}
	=
	ab,
	\]
	and therefore $[gh]\in C^{*}([f_\lambda])$.
Similarly,
	\[
	\lim_{x\to\infty}\frac{x}{g(x)}
	=
	\frac1a,
\text{ that implies }
	I[g^{-1}] 
	=
	\left\{\frac1a\right\}.
	\]
	Thus $[g]^{-1}\in C^{*}([f_\lambda])$.
\noindent	
Since $I[\mathrm{id}_{\mathbb R_+}]=\{1\}$, the identity belongs to $C^{*}([f_\lambda])$. Therefore $C^{*}([f_\lambda])$ is a subgroup of $C([f_\lambda])$.
\end{proof}
By Theorem \ref{main2}, for every $[g]\in C^{*}([f_\lambda])$, there exists a unique $a>0$ such that
$
I[g]=\{a\}.
$
This allows us to define a map
$$
\Phi:C^{*}([f_\lambda])\longrightarrow \mathbb{R}_{+}
$$
 given by
$$
\Phi([g])=a,
\qquad\text{where } I[g]=\{a\}.
$$
Our goal is to prove that $\Phi$ is an isomorphism. We begin by showing that the invariant $I[g]$ behaves multiplicatively under composition.

\begin{Lem}
	\label{lemma:composition}
	Let $[g],[h]\in C^{*}([f_\lambda])$ and suppose
	$I[g]=\{a\}$ and
	$I[h]=\{b\}$.
	Then
	\[
	I[gh]=\{ab\}.
	\]
\end{Lem}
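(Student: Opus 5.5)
The plan is to use Lemma~\ref{thm:rigidity-general} to replace each quasi-inverse by a linear map up to bounded error, and then compute the invariant of the composition along an explicit coarsely dense sequence. By that lemma there are constants $M,N>0$ with
\[
\left|g^{-1}(x)-\frac{x}{a}\right|\le M,
\qquad
\left|h^{-1}(x)-\frac{x}{b}\right|\le N
\]
for all $x$.

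The first step is to bound $(gh)^{-1}$. Since $(gh)^{-1}\sim h^{-1}g^{-1}$, and Proposition~\ref{prop:invariant-well-defined} makes $I$ depend only on the class, it suffices to work with $h^{-1}(g^{-1}(x))$. Inserting the two estimates gives
\[
\left|h^{-1}(g^{-1}(x))-\frac{x}{ab}\right|
\le \left|h^{-1}(g^{-1}(x))-\frac{g^{-1}(x)}{b}\right|+\frac1b\left|g^{-1}(x)-\frac{x}{a}\right|
\le N+\frac{M}{b}.
\]
So $(gh)^{-1}(x)=\frac{x}{ab}+O(1)$ uniformly in $x$, which is the estimate already used in Corollary~\ref{cor:subgroup}.

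The second step shows $ab\in I[gh]$. Take the coarsely dense sequence $x_n=n$. The bound above gives $n/(gh)^{-1}(n)\to ab$, since the $O(1)$ error is negligible after dividing by $n$. Here one must remember that $(gh)^{-1}(x)\ge x/(ab)-N-M/b>0$ for large $x$, so the ratio is defined and the limit is legitimate.

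The third step shows $I[gh]$ has no other element. Because $[gh]\in C([f_\lambda])$ (centralizers are subgroups), Theorem~\ref{main2} says $I[gh]$ has at most one element, so $I[gh]=\{ab\}$. Alternatively one can argue directly: for any coarsely dense $\{x_n\}$ along which the limit exists, the uniform bound forces $x_n/(gh)^{-1}(x_n)\to ab$.

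I do not expect a real obstacle. The only point needing care is the identification $(gh)^{-1}\sim h^{-1}g^{-1}$ together with the order-of-composition convention. With either convention the product of the constants is $ab$, since multiplication of the scalars commutes, so the conclusion is unaffected.
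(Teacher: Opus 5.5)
Your argument is correct, but it is not the paper's proof of this lemma. It is essentially the computation from Corollary~\ref{cor:subgroup}, pushed to an equality of sets.

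The paper takes an arbitrary coarsely dense $\{x_n\}$ along which $x_n/(gh)^{-1}(x_n)$ converges, and writes
\[
\frac{x_n}{(gh)^{-1}(x_n)}=\frac{x_n}{g^{-1}(x_n)}\cdot\frac{g^{-1}(x_n)}{h^{-1}(g^{-1}(x_n))}.
\]
Using that $\{g^{-1}(x_n)\}$ is again coarsely dense, it sends the two factors to $a$ and $b$. This factorization is lighter. Once properly justified, it does not even need commutation with $f_\lambda$: convergence along one entire coarsely dense sequence already forces $t/g^{-1}(t)\to a$ as $t\to\infty$, by the coarse Lipschitz property of $g^{-1}$. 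As written, however, it has two weak points. It silently uses that the factor ratios converge along these particular sequences, which $I[g]=\{a\}$ does not literally provide. And it only yields $I[gh]\subseteq\{ab\}$, leaving non-emptiness to Corollary~\ref{cor:subgroup}.

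You instead invoke Lemma~\ref{thm:rigidity-general} up front to get the uniform estimate $(gh)^{-1}(x)=x/(ab)+O(1)$, which handles both points at once. The sequence $x_n=n$ witnesses $ab\in I[gh]$, and the same bound (or Theorem~\ref{main2}) rules out any other value. The cost is dependence on Lemma~\ref{thm:rigidity-general}, and hence on the centralizer hypothesis. The gain is a stronger conclusion: $[gh]$ is the class of the dilation $x\mapsto abx$. That bounded-distance control is exactly what the later injectivity argument relies on.
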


\begin{proof}
	Let $\{x_n\}$ be a coarsely dense sequence at infinity such that
	$$
	\lim_{n\to\infty}\frac{x_n}{(gh)^{-1}(x_n)}
	$$
	exists. Since
	$
	(gh)^{-1}=h^{-1}\circ g^{-1},
	$
	we have
	$$
	\frac{x_n}{(gh)^{-1}(x_n)}
	=\frac{x_n}{g^{-1}(x_n)}
	\cdot
	\frac{g^{-1}(x_n)}
	{h^{-1}(g^{-1}(x_n))}.
	$$
	Since $g^{-1}$ is a quasi-isometry and $\{x_n\}$ is coarsely dense at infinity, the sequence $\{g^{-1}(x_n)\}$ is also coarsely dense at infinity. Therefore,
	$$
	\lim_{n\to\infty}\frac{x_n}{g^{-1}(x_n)}=a
	\qquad\text{and}\qquad
	\lim_{n\to\infty}
	\frac{g^{-1}(x_n)}
	{h^{-1}(g^{-1}(x_n))}
	=b.
	$$
	Passing to the limit in the above identity yields
	$$
	\lim_{n\to\infty}
	\frac{x_n}{(gh)^{-1}(x_n)}=ab.
	$$
	Therefore,
	$
	I[gh]=\{ab\}.
	$
\end{proof}
The multiplicative property established in Lemma \ref{lemma:composition} allows us to identify the asymptotic invariant with a group homomorphism.
\begin{Prop}\label{homomorphism}
	The map
	$\Phi:C^{*}([f_\lambda])\to \mathbb{R}_{>0}$
	is a well-defined group homomorphism.
\end{Prop}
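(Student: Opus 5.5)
To prove that $\Phi$ is well defined, we need three facts: $\Phi$ has a value on every element, that value is unique, and it lies in $\mathbb{R}_{>0}$. For $[g]\in C^{*}([f_\lambda])$ the set $I[g]$ is non-empty by definition of $C^{*}([f_\lambda])$. It has at most one element by Theorem~\ref{main2}, so $I[g]=\{a\}$ for a unique $a$.

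Next we show $a>0$. Since $g^{-1}$ is a quasi-isometry, there are constants $K\ge 1$ and $A\ge0$ with
\[
g^{-1}(x)\le Kx+A'
\]
for large $x$, where $A'$ absorbs $g^{-1}(x_0)$ for a fixed base point $x_0$. Hence $x/g^{-1}(x)\ge 1/(2K)$ for large $x$, so the limit $a$ is at least $1/(2K)>0$. The symmetric lower bound $g^{-1}(x)\ge x/K - A'$ shows that the ratio stays bounded, so $a$ is finite.

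Independence of the representative follows from Proposition~\ref{prop:invariant-well-defined}: $I[g]$ depends only on the class $[g]$. Therefore $\Phi([g])=a$ is a well-defined map $C^{*}([f_\lambda])\to\mathbb{R}_{>0}$.

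For the homomorphism property, take $[g],[h]\in C^{*}([f_\lambda])$ with $I[g]=\{a\}$ and $I[h]=\{b\}$. By Corollary~\ref{cor:subgroup}, $[g][h]=[gh]$ lies in $C^{*}([f_\lambda])$, so $\Phi([gh])$ is defined. Lemma~\ref{lemma:composition} then gives $I[gh]=\{ab\}$, that is,
\[
\Phi([g][h])=ab=\Phi([g])\Phi([h]).
\]
It is worth adding that $\Phi$ sends the identity class to $1$, since $I[\mathrm{id}]=\{1\}$, and sends inverses to reciprocals, since $I[g^{-1}]=\{1/a\}$ by the proof of Corollary~\ref{cor:subgroup}. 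This is consistent with, though logically implied by, multiplicativity.

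The main obstacle is Lemma~\ref{lemma:composition} as written. It assumes that the limit of $x_n/(gh)^{-1}(x_n)$ exists along some coarsely dense sequence, and then uses that $\{g^{-1}(x_n)\}$ is coarsely dense to identify the two factor limits. That step implicitly asserts that every coarsely dense sequence realizes the limit. If I wanted the homomorphism step to be airtight, I would instead use the uniform estimates of Lemma~\ref{thm:rigidity-general}: $|g^{-1}(x)-x/a|\le M$ and $|h^{-1}(x)-x/b|\le N$ for all $x$. Combining them with the coarse Lipschitz property of $h^{-1}$ gives
\[
\Bigl|(gh)^{-1}(x)-\frac{x}{ab}\Bigr|\le N+\frac{M}{b}.
\]
Hence $x/(gh)^{-1}(x)\to ab$ as $x\to\infty$ along every sequence, in particular along any coarsely dense one such as $x_n=n$. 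So $ab\in I[gh]$, and Theorem~\ref{main2} forces $I[gh]=\{ab\}$. This route avoids the subtlety and is the argument I would actually write.
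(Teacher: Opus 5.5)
Your proposal is correct. For well-definedness it matches the paper: non-emptiness comes from the definition and uniqueness from Theorem~\ref{main2}. You also make explicit two checks the paper leaves implicit, namely independence of the representative (Proposition~\ref{prop:invariant-well-defined}) and $0<a<\infty$.

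The genuine difference is in multiplicativity. The paper simply invokes Lemma~\ref{lemma:composition}. You instead use the uniform estimate of Lemma~\ref{thm:rigidity-general}, which is exactly the computation the paper already performs in Corollary~\ref{cor:subgroup}. Incidentally, the bound $N+M/b$ follows from the triangle inequality alone, by applying the $h$-estimate at the point $g^{-1}(x)$, so the coarse Lipschitz property of $h^{-1}$ is not needed.

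Your objection to Lemma~\ref{lemma:composition} as written is fair. $I[g]=\{a\}$ only says that \emph{convergent} ratios along coarsely dense sequences equal $a$. So asserting both factor limits for an arbitrary such sequence is unjustified, and the lemma alone yields only $I[gh]\subseteq\{ab\}$, with non-emptiness supplied by Corollary~\ref{cor:subgroup}. The step is nonetheless true and cheap to repair, without any commutation hypothesis. If $x_n/g^{-1}(x_n)\to a$ along one coarsely dense sequence, then every large $t$ lies within $D$ of some $x_n$ and $g^{-1}$ is coarsely Lipschitz, so $t/g^{-1}(t)\to a$ as $t\to\infty$.

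So the routes trade off as follows. The patched sequence-level argument gives multiplicativity of $I$ for arbitrary quasi-isometries with non-empty invariant, independent of $f_\lambda$. Your route leans on Lemma~\ref{thm:rigidity-general}, which uses commutation with $f_\lambda$ essentially but is much stronger. It says $[g]=[g_a]$, so $\Phi$ is just $[g_a]\mapsto a$ and multiplicativity reduces to $[g_a][g_b]=[g_{ab}]$.
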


\begin{proof}
	By Theorem 3.5, for every $[g]\in C^{*}([f_\lambda])$, the set $I[g]$
	contains a unique element. Hence $\Phi$ is well defined.
	
	Let $[g],[h]\in C^{*}([f_\lambda])$ and suppose
	$
	I[g]=\{a\},
	$ and $
	I[h]=\{b\}.
	$
	By Lemma \ref{lemma:composition},
	$
	I[gh]=\{ab\}.
	$
	Therefore
	$$
	\Phi([gh])
	=
	ab
	=
	\Phi([g])\Phi([h]),
	$$
	and hence $\Phi$ is a group homomorphism.
\end{proof}

We next prove that every positive real number arises as the asymptotic invariant of an element of $C^{*}([f_\lambda])$.

\begin{Prop}
\label{prop:surjective}
The map
$
\Phi:C^{*}([f_\lambda])\to \mathbb R_{>0}
$
is surjective.
\end{Prop}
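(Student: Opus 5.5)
Given $a>0$, the natural candidate preimage is the dilation $g=f_a$, $g(x)=ax$. I will show that $[f_a]\in C^{*}([f_\lambda])$ and that $\Phi([f_a])=a$.

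\emph{$f_a$ is a quasi-isometry and lies in the centralizer.} The map $f_a$ is a bijective bi-Lipschitz homeomorphism of $\mathbb R_+$, with constant $\mu=\max\{a,1/a\}$ and $\delta=0$, and it is surjective. So $[f_a]\in QI(\mathbb R_+)$. It commutes with $f_\lambda$ exactly, since $f_\lambda(f_a(x))=\lambda a x=f_a(f_\lambda(x))$ for all $x$. Hence $[f_a][f_\lambda]=[f_\lambda][f_a]$, that is, $[f_a]\in C([f_\lambda])$.

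\emph{Computing the invariant.} A quasi-inverse of $f_a$ is the honest inverse $f_a^{-1}=f_{1/a}$, $x\mapsto x/a$. By Proposition~\ref{prop:invariant-well-defined}, $I[f_a]$ does not depend on which quasi-inverse is chosen, since any two quasi-inverses are at bounded distance. Take the coarsely dense sequence $x_n=n$ (with $D=1$, $T_0=1$). Then $x_n/f_a^{-1}(x_n)=a$ for every $n$, so the limit exists and equals $a$. Therefore $a\in I[f_a]$, so $I[f_a]\neq\varnothing$ and $[f_a]\in C^{*}([f_\lambda])$. By Theorem~\ref{main2}, $I[f_a]=\{a\}$, which gives $\Phi([f_a])=a$. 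Since $a>0$ was arbitrary, $\Phi$ is surjective.

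There is no real obstacle here. The only points needing care are the appeal to Proposition~\ref{prop:invariant-well-defined}, which justifies using the exact inverse in the definition of $I$, and the observation that the ratio is constant along the coarsely dense sequence, so the limit trivially exists.
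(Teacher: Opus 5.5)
Your proof is correct and follows the paper's argument. You take the dilation $x\mapsto ax$, note that it is bi-Lipschitz and commutes exactly with $f_\lambda$, and observe that $x_n/(x_n/a)=a$. The only difference is cosmetic: the paper notes that this ratio equals $a$ along every coarsely dense sequence, which gives $I[g_a]=\{a\}$ directly, whereas you use the single sequence $x_n=n$ and then invoke Theorem~\ref{main2} for uniqueness; either works.
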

\begin{proof}
	Let $a>0$ and consider the dilation
	$
	g_a(x)=ax.
	$
	Since $g_a$ is bi-Lipschitz, we have $[g_a]\in QI(\mathbb R_+)$. Moreover,
	$
	f_\lambda\circ g_a=g_a\circ f_\lambda,
	$
	and hence $[g_a]\in C([f_\lambda])$.
	Since
	$$
	g_a^{-1}(x)=\frac{x}{a},
	\text{ we get, } 
	\frac{x_n}{g_a^{-1}(x_n)}=a
	$$
	for every coarsely dense sequence $\{x_n\}$. Therefore,
	$
	I[g_a]={a}.
	$
	Consequently, $[g_a]\in C^{*}([f_\lambda])$ and
	$
	\Phi([g_a])=a.
	$
	Since $a>0$ was arbitrary, $\Phi$ is surjective.
\end{proof}

We now turn to the kernel of $\Phi$. The following auxiliary lemma provide the estimates needed to establish injectivity which can be proved similarly as Lemma \ref{thm:rigidity-general}.

\begin{Lem}
	\label{lem:kernel1}
	Let $[h]\in C^{*}([f_\lambda])$ and define
	$$
	u(x)=h^{-1}(x)-x.
	$$
	Then there exists a constant $C>0$ such that
	$$
	|u(\lambda x)-\lambda u(x)|\le C,~
	\text{for~ all~} x.$$ Hence for all  $x$ and every $n\in\mathbb N$,
	$$
	\left|
	\frac{u(\lambda^n x)}{\lambda^n}
	-u(x)
	\right|
	\le
	\frac{C}{\lambda-1}.
	$$
	
\end{Lem}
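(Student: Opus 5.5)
The plan has two stages. First we obtain the one-step coarse functional inequality for $u$ from the centralizer condition applied to $h^{-1}$. Then we iterate it along the orbit $x,\lambda x,\lambda^2x,\dots$ and sum a geometric series.

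\textbf{Step 1: the inequality for $h^{-1}$.} Since $[h]\in C([f_\lambda])$ and $C([f_\lambda])$ is a subgroup of $QI(\mathbb R_+)$, the inverse class $[h]^{-1}=[h^{-1}]$ also commutes with $[f_\lambda]$. Concretely, $[h][f_\lambda]=[f_\lambda][h]$ gives $[f_\lambda][h^{-1}]=[h^{-1}][f_\lambda]$ after multiplying on both sides by $[h^{-1}]$. Hence there is $C_0>0$ with
\[
|h^{-1}(\lambda x)-\lambda h^{-1}(x)|\le C_0
\]
for all $x$. This step presupposes that the quasi-isometries are taken with the equivalence constant holding on all of $\mathbb R_+$, as is implicit in Lemma~\ref{thm:rigidity-general}. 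If the bound only holds for large $x$, then since $\mathbb R_+$ is bounded near $0$ and $h^{-1}$ is coarsely Lipschitz, we can enlarge the constant to cover all $x$.

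\textbf{Step 2: transfer to $u$.} Substituting $h^{-1}(x)=x+u(x)$, the linear terms cancel exactly:
\[
u(\lambda x)-\lambda u(x)=h^{-1}(\lambda x)-\lambda x-\lambda h^{-1}(x)+\lambda x.
\]
This gives $|u(\lambda x)-\lambda u(x)|\le C$ with $C=C_0$.

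\textbf{Step 3: iterate and telescope.} Replace $x$ by $\lambda^k x$ and divide by $\lambda^{k+1}$:
\[
\left|\frac{u(\lambda^{k+1}x)}{\lambda^{k+1}}-\frac{u(\lambda^k x)}{\lambda^k}\right|\le \frac{C}{\lambda^{k+1}}.
\]
Summing over $k=0,\dots,n-1$ and applying the triangle inequality gives
\[
\left|\frac{u(\lambda^n x)}{\lambda^n}-u(x)\right|\le C\sum_{k\ge1}\lambda^{-k}=\frac{C}{\lambda-1}.
\]

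The only point requiring care is Step 1: making sure the commutation passes to the quasi-inverse with a uniform constant valid for all $x$. The rest is purely algebraic, so I expect no real obstacle beyond this constant bookkeeping.
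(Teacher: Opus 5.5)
Your proposal is correct and takes essentially the same route as the paper, which proves this lemma as in Lemma~\ref{thm:rigidity-general}: the centralizer condition for $h^{-1}$ gives $|u(\lambda x)-\lambda u(x)|\le C$ because the linear terms cancel, and telescoping along $x,\lambda x,\dots,\lambda^n x$ gives the bound $C\sum_{k\ge1}\lambda^{-k}=C/(\lambda-1)$. Your Step 1 is more careful than the paper, which simply asserts the inequality for $h^{-1}$. With the paper's definition of $\sim$ (a supremum over all of $\mathbb R_+$), the bound already holds for all $x$, so your fallback argument for small $x$ is not needed.
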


The preceding lemma provide the estimates needed to determine the kernel of $\Phi$.
\begin{Lem}
	\label{thm:injective}
	The homomorphism
	$\Phi:C^{*}([f_\lambda])\to \mathbb R_{>0}$	
	is injective.
\end{Lem}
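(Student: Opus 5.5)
To prove injectivity I will show that the kernel of $\Phi$ is trivial. Suppose $[h]\in C^{*}([f_\lambda])$ with $\Phi([h])=1$, so that $I[h]=\{1\}$. The goal is to show that $h^{-1}$ lies at bounded distance from the identity. Then $[h^{-1}]=[\mathrm{id}]$, and hence $[h]=[\mathrm{id}]$.

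Set $u(x)=h^{-1}(x)-x$ as in Lemma~\ref{lem:kernel1}. This gives a constant $C$ with $|u(\lambda x)-\lambda u(x)|\le C$ and
\[
\left|\frac{u(\lambda^n x)}{\lambda^n}-u(x)\right|\le \frac{C}{\lambda-1}
\]
for all $x$ and all $n\in\mathbb N$. The scaling inequality comes from the coarse commutation $|h^{-1}(\lambda x)-\lambda h^{-1}(x)|\le C$, which holds because $[h^{-1}]$ also commutes with $[f_\lambda]$. The telescoping bound comes from summing $\lambda^{-k-1}|u(\lambda^{k+1}x)-\lambda u(\lambda^k x)|\le C\lambda^{-k-1}$. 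Since $I[h]=\{1\}$, there is a coarsely dense sequence $\{x_n\}$ with $x_n/h^{-1}(x_n)\to 1$, and this is equivalent to $u(x_n)/x_n\to 0$.

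The key step, and the one I expect to be the main obstacle, is upgrading convergence along a single coarsely dense sequence to the statement that $u(\lambda^n x)/\lambda^n\to 0$ for each fixed $x$. Once that holds, letting $n\to\infty$ in the telescoping bound gives $|u(x)|\le C/(\lambda-1)$ for every $x$.

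To get the upgrade, fix $x$. By coarse density, for each large $n$ choose an index $r_n$ with $|x_{r_n}-\lambda^n x|\le D$. Since $h^{-1}$ is a quasi-isometry, it is coarsely Lipschitz, so
\[
|u(x_{r_n})-u(\lambda^n x)|\le |h^{-1}(x_{r_n})-h^{-1}(\lambda^n x)|+|x_{r_n}-\lambda^n x|\le KD+A+D.
\]
Hence
\[
\frac{|u(\lambda^n x)|}{\lambda^n}\le \frac{|u(x_{r_n})|}{x_{r_n}}\cdot\frac{x_{r_n}}{\lambda^n}+\frac{KD+A+D}{\lambda^n}.
\]
Here $x_{r_n}/\lambda^n\to x$ and $x_{r_n}\to\infty$. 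Since $u(x_m)/x_m\to 0$ as $m\to\infty$ and $r_n\to\infty$ because $x_{r_n}\to\infty$, the first term tends to $0$. The second term tends to $0$ as well, so $u(\lambda^n x)/\lambda^n\to 0$. This also supplies the detail deferred in the proof of Lemma~\ref{thm:rigidity-general}, with $x/a$ in place of $x$.

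Finally, boundedness of $u$ means $\sup_x|h^{-1}(x)-x|<\infty$, so $h^{-1}\sim\mathrm{id}$. Hence $[h]=[h^{-1}]^{-1}=[\mathrm{id}]$ and $\ker\Phi$ is trivial. Combined with Proposition~\ref{homomorphism}, this shows $\Phi$ is injective.

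One minor point needs care. The coarse commutation and quasi-Lipschitz estimates may hold only for sufficiently large arguments. Since the conclusion concerns equivalence classes, I would restrict to $x\ge T$ for a suitable $T$ and absorb the compact remaining part into the bounded constant.
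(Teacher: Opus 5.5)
Your proof is correct and follows essentially the same route as the paper. You set $u=h^{-1}-\mathrm{id}$, use the telescoped bound of Lemma~\ref{lem:kernel1}, transfer $u(x_n)/x_n\to 0$ to the points $\lambda^m t$ via coarse density and the coarse Lipschitz estimate, and let $m\to\infty$ to get $|u(t)|\le C/(\lambda-1)$. Your version is slightly more careful than the paper's in two places: you take a coarsely dense sequence that witnesses $1\in I[h]$ rather than an arbitrary one, and you justify $r_n\to\infty$ instead of asserting that a subsequence exists.
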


\begin{proof}
	Let $[h]\in\ker(\Phi)$. Then
	$
	\Phi([h])=1,
	$
	and hence
	$
	I[h]=\{1\}.
	$\\
	Define
	$$
	u(x)=h^{-1}(x)-x.
	$$
	Let $\{x_n\}$ be a coarsely dense sequence at infinity. Since
	$
	I[h]=\{1\},
	$
	we have
	$$
	\lim_{n\to\infty}\frac{x_n}{h^{-1}(x_n)}=1.
	$$
	Since $h^{-1}(x_n)=x_n+u(x_n)$, it follows that
	$
	\displaystyle\lim_{n\to\infty}\displaystyle\frac{u(x_n)}{x_n}=0.
	$\\
	Fix $t>0$. Since $\{x_n\}$ is coarsely dense at infinity, there exist
	$D>0$ and a subsequence $\{x_{n_m}\}$ such that
	$$
	|x_{n_m}-\lambda^m t|\le D
	$$
	for all sufficiently large $m$.\\
	Since $h^{-1}$ is a quasi-isometry, there exist constants $K\ge1$ and
	$A>0$ such that
	$$
	|u(a)-u(b)|
	\le
	(K+1)|a-b|+A
	$$
	for all sufficiently large $a,b$.
	Consequently,
	$$
	|u(x_{n_m})-u(\lambda^m t)|
	\le
	(K+1)D+A.
	$$
	Dividing by $\lambda^m$ and letting $m\to\infty$, we obtain
	$$
	\frac{u(\lambda^m t)}{\lambda^m}\longrightarrow0,
	$$
	since
	$$
	\frac{u(x_{n_m})}{\lambda^m}
	=
	\frac{u(x_{n_m})}{x_{n_m}}
	\cdot
	\frac{x_{n_m}}{\lambda^m}
	\longrightarrow0.
	$$
	Applying Lemma \ref{lem:kernel1}, we obtain
	$$
	\left|
	\frac{u(\lambda^m t)}{\lambda^m}-u(t)
	\right|
	\le
	\frac{C}{\lambda-1}.
	$$
	Letting $m\to\infty$ yields
	$$
	|u(t)|
	\le
	\frac{C}{\lambda-1}.
	$$
	Since $t>0$ was arbitrary, $u$ is bounded on $\mathbb R_+$. Therefore
	$
	h^{-1}\sim id_{\mathbb R_+},
	$
	and hence $[h]=1$. Thus $\ker(\Phi)=\{1\}$, proving that $\Phi$ is injective.
\end{proof}
The preceding results completely determine the structure of
$C^{*}([f_\lambda])$.

	\subsection{Proof of Theorem \ref{main}} By Proposition \ref{homomorphism}, the map
	$$
	\Phi:C^{*}([f_\lambda])\to\mathbb R_{+}
	$$
	is a group homomorphism. Proposition \ref{prop:surjective}
	shows that $\Phi$ is surjective, while
	Lemma \ref{thm:injective} implies that $\Phi$ is injective.
	Hence $\Phi$ is an isomorphism, and therefore
	$$
	C^{*}([f_\lambda])
	\cong
	\mathbb R_{+}.
	$$

\begin{Rem}
	This classification illustrates the necessity of restricting attention to
	$C^{*}([f_\lambda])$ rather than the full centralizer $C([f_\lambda])$.
	As demonstrated in Example~3.7, the centralizer $C([f_\lambda])$
	contains bi-Lipschitz homeomorphisms whose asymptotic invariants are empty,
	that is, $I[g]=\varnothing$. Consequently, the map $\Phi$ cannot be defined
	on all of $C([f_\lambda])$. Passing to $C^{*}([f_\lambda])$ isolates those
	elements exhibiting asymptotically stable scaling behavior, and Theorem~4.4
	shows that this distinguished subgroup is naturally isomorphic to
	$\mathbb{R}_{>0}$. 
\end{Rem}
\begin{Rem}
	As an immediate consequence of Theorem~4.4, the subgroup
	$C^{*}([f_\lambda])$ is left-orderable. Indeed, since
	$
	C^{*}([f_\lambda])\cong \mathbb{R}_{>0},
$
	it inherits the natural order structure of $\mathbb{R}_{>0}$. In fact,
	$C^{*}([f_\lambda])$ is bi-orderable.
\end{Rem}

\begin{Rem}
	Theorem~\ref{main} identifies the subgroup $C^{*}([f_\lambda])$
	algebraically with the multiplicative group $\mathbb{R}_{+}$ via the
	isomorphism
	\[
	\Phi:C^{*}([f_\lambda])\longrightarrow\mathbb{R}_{+}.
	\]
	Since $\mathbb{R}_{+}$ carries its standard left order and
	$C^{*}([f_\lambda])$ is a subgroup of the left-orderable group
	$QI(\mathbb{R}_{+})$ \cite{Ye Zhao}, it is natural to ask whether this
	algebraic identification is also order-theoretic. More precisely, one may ask
	whether the restriction of a left order on $QI(\mathbb{R}_{+})$ to
	$C^{*}([f_\lambda])$ agrees with the order transported from
	$\mathbb{R}_{+}$ via $\Phi$. Equivalently, is the isomorphism
	$\Phi$ order-preserving with respect to the induced left order on
	$C^{*}([f_\lambda])$? A positive answer would establish a direct link
	between the asymptotic rigidity developed in the present paper and the
	order-theoretic structure of the ambient quasi-isometry group.
\end{Rem}

\section*{Acknowledgements}
The authors thank Dr. Prateep Chakraborty for his valuable suggestions and fruitful comments. The first author of this article acknowledges the financial support from Indian Institute of Science Education and Research Bhopal, India and second author acknowledges the financial support from IIT Palakkad, India.

\section*{Conflict of Interest}
The author declares that there is no conflict of interest regarding the publication of this article.

\end{document}